\documentclass[12pt]{amsart}
\usepackage{amssymb}
\usepackage[all]{xy}
\usepackage{color}
\usepackage{amsmath}
\input xy
\xyoption{all}
\usepackage{pgf, tikz}
\usepackage{mathtools}
\usepackage{amsfonts}
\usepackage{amsthm}
\textheight 22cm 
\textwidth 16cm \hoffset -1.5cm
\usepackage{amscd}
\newtheorem{lemma}{Lemma}[section]

\newtheorem{theorem}[lemma]{Theorem}

\theoremstyle{definition}

\newtheorem{definition}[lemma]{Definition}

\newtheorem{example}[lemma]{Example}

\newtheorem{conjecture}[lemma]{Conjecture}
\newcommand{\bsm}{\begin{smallmatrix}}
\newcommand{\esm}{\end{smallmatrix}}
\newcommand{\bbm}{\begin{matrix}}
\newcommand{\ebm}{\end{matrix}}
\DeclareMathOperator{\Hom}{Hom}
\DeclareMathOperator{\Ext}{Ext}
\DeclareMathOperator{\End}{End}

\begin{document}

\title[Maximal forward hom-orthogonal sequences]{Maximal forward hom-orthogonal sequences for cluster-tilted algebras of finite type}
\newcommand\shortTitle{Maximal forward hom-orthogonal sequences}
\author{Alireza Nasr-Isfahani}
\address{Department of Mathematics,
University of Isfahan,
P.O. Box: 81746-73441, Isfahan, Iran and School of Mathematics, Institute for Research in Fundamental Sciences (IPM), P.O. Box: 19395-5746, Tehran,
Iran}
\email{nasr$_{-}$a@sci.ui.ac.ir / nasr@ipm.ir}

\subjclass[2000]{Primary {16G20}, {13F60}; Secondary {05E10}}

\keywords{Cluster-tilted algebras, Forward hom-orthogonal sequences, Maximal green sequences, Schurian modules, Relation-extension algebras, Cluster repetitive algebras}

\begin{abstract} Let $\Lambda$ be a cluster-tilted algebra of finite type over an algebraically closed field and $B$ be one of the associated tilted algebras. We show that the $B$-modules, ordered form right to left in the Auslander-Reiten quiver of $\Lambda$ form a maximal forward hom-orthogonal sequence of $\Lambda$-modules whose dimension vectors form the $c$-vectors of a maximal green sequence for $\Lambda$. Thus we give a proof of Igusa-Todorov's conjecture.
\end{abstract}

\maketitle


\section{Introduction}

A maximal green sequence is a certain sequence of quiver mutations were introduced by Keller \cite{K} in order to obtain quantum dilogarithm identities and refined Donaldson-Thomas invariants of Kontsevich and Soibelman. At the same time, the same sequences of quiver mutations are studied in theoretical physics (see \cite{ACCERV, CCV}). Maximal green sequences are also connected to representation theory \cite{BDP} and cluster algebras \cite{FZ1}.

The Cluster-tilted algebras have been introduced and investigated by Buan, Marsh and Reiten
\cite{BMR1}. They are the endomorphism rings of cluster-tilting objects in cluster categories. Cluster-tilted algebras have a key role in the study of cluster categories.
Also an important connection between cluster algebras and
cluster-tilted algebras was established in \cite{BMR2} and
\cite{CCS2}.

Igusa in \cite{I} studied maximal green sequences for cluster-tilted algebras of finite type. It is known that a cluster-tilted algebra of finite type over an algebraically closed field is isomorphic to the Jacobian algebra $J(Q, W)$ of a quiver with potential of finite type \cite{DWZ}. Let $Q$ be a finite type quiver with nondegenerate potential $W$ over any field and $\Lambda=J(Q, W)$ be the Jacobian algebra. Let $\beta_1, \cdots, \beta_m$ be a finite sequence of elements of $\mathbb{N}^n$ where $n$ is the number of vertices of $Q$. Igusa in \cite{I} proved that the following conditions are equivalent:
\begin{itemize}
\item[(1)] $\beta_1, \cdots, \beta_m$ are the $c$-vectors of a maximal green sequence for $\Lambda$.
\item[(2)] There exist Schurian $\Lambda$-modules $M_1, \cdots, M_m$ with dimension vectors $\underline{dim} M_i=\beta_i$ such that:
\begin{itemize}
\item[(a)] $Hom_\Lambda(M_i, M_j)=0$ for all $i<j$.
\item[(b)] No other module can be inserted into this sequence preserving $(a)$.
\end{itemize}
\item[(3)] There exists a generic green path $\gamma$ crossing a finite number of stability walls $D(M_1), \cdots D(M_m)$ so that $\underline{dim} M_i=\beta_i$
\end{itemize}

The sequence $M_1, \cdots, M_m$ of Schurian $\Lambda$-modules in $(2)$ is called a maximal forward hom-orthogonal sequence \cite{I}. Therefore, for any Jacobian algebra $\Lambda=J(Q, W)$ of finite type, there is a one to one correspondence between the maximal green sequences and the maximal forward hom-orthogonal sequences of Schurian modules. Igusa and Todorov in the appendix of \cite{I} studied the maximal forward hom-orthogonal sequences for cluster-tilted algebras of type $A_n$. They used a theorem due to Assem et al. \cite{ABS} and Zhu \cite{Z}, which gives an equivalent definition of cluster-tilted algebras. According to that theorem, any cluster-tilted algebra is isomorphic to the relation-extension of some tilted algebra. Let $B$ be a tilted algebra of type $A_n$ and $\Lambda$ be the relation-extension of $B$. Igusa and Todorov proved that the indecomposable $B$-modules, ordered from right to left in the Auslander-Reiten quiver of $\Lambda$, form a maximal forward hom-orthogonal sequence of $\Lambda$-modules. They also posted the following conjecture:\\

\begin{conjecture}\label{con}(Igusa-Todorov)
Let $\Lambda$ be a cluster-tilted algebra of finite type and let $B$ be one of the associated tilted algebras.
\begin{itemize}
\item[(a)] The $B$-modules, ordered from right to left in the Auslander-Reiten quiver of $\Lambda$ form a maximal forward hom-orthogonal sequence of $\Lambda$-modules whose dimension vectors form the $c$-vectors of a maximal green sequence for $\Lambda$.
\item[(b)] The longest maximal green sequence for $\Lambda$ is given in this way.
\end{itemize}
\end{conjecture}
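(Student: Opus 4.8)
The plan is to reduce the statement to Igusa's theorem quoted above and then verify its condition~(2) directly, generalising the argument that Igusa and Todorov carried out in type $A_n$.

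\emph{Setup.} By the Assem--Br\"ustle--Schiffler and Zhu description, $\Lambda\cong B\ltimes E$ with $E=\Ext^2_B(DB,B)$ an ideal satisfying $E^2=0$ and $\Lambda/E\cong B$. Hence every $B$-module becomes a $\Lambda$-module via $\Lambda\to B$, and for indecomposable $B$-modules $M,N$ one has $\Hom_\Lambda(M,N)=\Hom_B(M,N)$, since a homomorphism between modules annihilated by $E$ is automatically $B$-linear. Write $B=\End_H(T_H)$ for a hereditary algebra $H$ of the relevant Dynkin type and a tilting module $T_H$; then $\Lambda\cong\End_{\mathcal C_H}(T)$ for the corresponding cluster-tilting object $T$ of the cluster category $\mathcal C_H$, and $\mathrm{mod}\,\Lambda\simeq\mathcal C_H/\operatorname{add}(T[1])$, so the AR quiver of $\Lambda$ is obtained from the finite AR quiver of $\mathcal C_H$ by deleting the $n$ vertices of $T[1]$, the indecomposable $B$-modules forming a full subquiver $\Sigma_B$. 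To make ``from right to left'' precise I would pass to the cluster repetitive algebra $\check B$, the locally bounded Galois $\mathbb Z$-covering of $\Lambda$ obtained by gluing copies $B^{(i)}$ of $B$ along copies of $E$; its AR quiver is a directed covering of $\Gamma(\Lambda)$, the copies $\mathrm{mod}\,B^{(i)}$ sit in it as convex slices laid out from left to right, the push-down identifies $\mathrm{ind}\,\mathrm{mod}\,B^{(0)}$ with the indecomposable $B$-modules of $\mathrm{mod}\,\Lambda$, and the layout shows that reading $\Sigma_B$ from right to left yields a linear extension of the opposite of the partial order on $\mathrm{ind}\,\mathrm{mod}\,B$ defined by chains of nonzero maps.

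\emph{Part (a): Schurian and forward hom-orthogonal.} By Igusa's theorem it is enough to show that the $B$-modules $M_1,\dots,M_m$ read from right to left form a maximal forward hom-orthogonal sequence of Schurian modules. Schurianness is immediate: a tilted algebra of Dynkin type is representation-directed, over a representation-directed algebra every indecomposable is a brick, so $\End_\Lambda(M_i)=\End_B(M_i)=k$. For forward hom-orthogonality, $\Hom_\Lambda(M_i,M_j)=\Hom_B(M_i,M_j)$ by the remark above --- equivalently, in the push-down formula $\Hom_\Lambda(M_i,M_j)\cong\bigoplus_{\ell\in\mathbb Z}\Hom_{\check B}(\widetilde M_i,{}^{g^\ell}\widetilde M_j)$ only the term $\ell=0$ survives, because $\widetilde M_i$ is annihilated by $B^{(r)}$ for every $r\neq0$ while ${}^{g^\ell}\widetilde M_j$ is annihilated by $B^{(0)}$ for $\ell\neq0$ --- and this vanishes for $i<j$ since the chosen order is a linear extension of the opposite of the path order and $\Hom_B(X,Y)\neq0$ with $X\not\cong Y$ forces $X\prec Y$.

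\emph{Part (a): maximality.} This I expect to be the main obstacle. Suppose a Schurian $\Lambda$-module $N$ could be inserted, say at position $k$. If $N$ were a $B$-module it would already appear as some $M_i$, so $E$ acts nontrivially on $N$, and a lift $\widetilde N$ in $\check B$ is an indecomposable straddling two consecutive copies $B^{(0)},B^{(1)}$. I would show that such a ``transition'' module necessarily admits a nonzero map from some $M_r$ lying at the right end of $\Sigma_B$ and a nonzero map to some $M_l$ lying at the left end --- this is exactly the role of $E=\Ext^2_B(DB,B)$, the bimodule responsible for the morphisms ``wrapping around'' $\Gamma(\Lambda)$ --- so that, just as in the type-$A_n$ computation of Igusa and Todorov, insertability at position $k$ forces both $k\le r$ and $k>l$, which is impossible. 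Hence the sequence is maximal, and Igusa's theorem then yields that $\underline{\dim}\,M_1,\dots,\underline{\dim}\,M_m$ are the $c$-vectors of a maximal green sequence for $\Lambda$.

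\emph{Part (b).} By part~(a) the maximal green sequence attached to $B$ has length $m=|\mathrm{ind}\,\mathrm{mod}\,B|$, so (b) reduces to showing $\ell\le\max_B|\mathrm{ind}\,\mathrm{mod}\,B|$ for the length $\ell$ of any maximal green sequence of $\Lambda$, the maximum running over the finitely many tilted algebras with relation-extension $\Lambda$, equivalently over the local slices of $\Lambda$ in the sense of Assem--Br\"ustle--Schiffler. The route I would take is to show that the underlying set of \emph{any} maximal forward hom-orthogonal sequence of $\Lambda$ lies inside $\mathrm{ind}\,\mathrm{mod}\,B'$ for a suitable such $B'$; by maximality this inclusion is forced to be an equality, so in fact every maximal green sequence of $\Lambda$ arises as in part~(a), and (b) follows. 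Recovering this $B'$ --- the local slice carrying an arbitrary maximal forward hom-orthogonal family --- from the raw hom-orthogonality data, using the combinatorics of $\check B$ and of local slices, is the step I expect to cost the most effort.
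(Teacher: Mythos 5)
Your plan for part~(a) follows the same route as the paper (relation-extension, Galois covering by the cluster repetitive algebra $\check B$, push-down, and a contradiction from a sub/quotient of any ``transition'' module), but the step you yourself flag as the main obstacle is precisely the mathematical content, and it is left unproved. What is needed is the paper's key lemma: every indecomposable finite-length $\check B$-module $(M_i,\alpha_i)_{i\in\mathbb Z}$ is concentrated in two consecutive layers $t-1,t$ with $M_{t-1}\in\mathcal X(T_H)$ and $M_t\in\mathcal Y(T_H)$, proved via the Brenner--Butler theorem, the identification $E\cong\Ext^1_H(T,\tau^{-1}T)$, and the adjunction $\Hom_B(X\otimes_BE,-)\cong\Hom_B(X,\Hom_B(E,-))$, which forces $\alpha_i$ to vanish on the torsion part and land in it. This lemma does double duty: it yields the exact sequence $0\to\widetilde M_{t-1}\to M\to\widetilde M_t\to 0$ over $\Lambda$ with nonzero ends (an indecomposable $B$-submodule in $\mathcal X(T_H)$ mapping into $M$ and an indecomposable $B$-quotient in $\mathcal Y(T_H)$ receiving a map from $M$, whose positions in the right-to-left order are incompatible with inserting $M$ anywhere), and it also establishes that $\check B$ is locally support finite, without which the Dowbor--Skowro\'nski density theorem does not apply and your lift $\widetilde N$ of an arbitrary non-$B$-module need not exist. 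Neither point is addressed in your sketch, so part~(a) is not yet proved.

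Part~(b) is a more serious problem: as stated (for a fixed associated tilted algebra $B$) it is \emph{false}, and the paper exhibits a counterexample in type $D_4$ where one associated tilted algebra $B$ contributes only $8$ indecomposables while another, $B'$, contributes $11$, so the sequence attached to $B$ cannot be the longest. Your own reduction silently replaces the statement by ``$\ell\le\max_{B}|\mathrm{ind}\,\mathrm{mod}\,B|$ over all associated tilted algebras,'' which is a different (and still open) assertion --- it is essentially the reformulated conjecture the paper proposes --- and your proposed route to it, namely that every maximal forward hom-orthogonal sequence of $\Lambda$-modules has underlying set $\mathrm{ind}\,\mathrm{mod}\,B'$ for some local slice $B'$, cannot work: maximal green sequences of $\Lambda$ of strictly smaller length exist (minimal length $5$ versus maximal length $11$ in the example), so not every maximal forward hom-orthogonal sequence arises from a tilted algebra in this way. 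You should restrict your claim to part~(a) and treat part~(b) as refuted in its original form.
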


In this paper by using cluster repetitive algebras we prove the part $(a)$ of the Igusa-Todorov's conjecture.
Before proving our main result in Section~\ref{sec:results} we provide the necessary background in the
following section.


\section{Preliminaries}

A quiver $Q$ is a quadruple $(Q_0, Q_1, s, t)$, where $Q_0$ is a set of vertices, $Q_1$ is a set of arrows and two functions $s, t:Q_1\rightarrow Q_0$ assign to each arrow $\alpha\in Q_1$ its source $s(\alpha)$ and its target $t(\alpha)$, respectively. In this note, quivers will be finite and connected. A quiver is called cluster quiver if it has no loops or oriented $2$-cycles.

\subsection{Ice quivers and mutations.} An ice quiver is a pair $(Q, F)$ where $Q$ is a quiver and $F\subset Q_0$ is a subset of vertices called frozen vertices such that there are no arrows between them. Elements of $Q_0\backslash F$ are called mutable vertices.
Let $Q$ be a cluster quiver, $(Q, F)$ be an ice quiver and $l\in Q_0$ be a mutable vertex. The mutation of $(Q, F)$ at $l$ is a new ice quiver $(\mu_l(Q), F)$, where $\mu_l(Q)$ is obtained from $Q$ by applying the following three steps:\\
\begin{itemize}
\item[(1)] For every path $i\rightarrow l\rightarrow j$ in $Q$, add a new arrow $i\rightarrow j$.
\item[(2)] Reverse the direction of all arrows incident to $l$ in $Q$.
\item[(3)] Remove any $2$-cycles created and remove all the arrows created between frozen vertices.
\end{itemize}
Two ice quivers are called mutation equivalent if one obtainable from the other by finitely many mutations at mutable vertices. Since mutation is involution, mutation equivalent defines an equivalence relation on the set of ice quivers. The equivalence class of an ice quiver $(Q, F)$, up to isomorphism of quivers that fixes the frozen vertices, is called mutation class of $(Q, F)$ and is denoted by  $Mut(Q, F)$.
\subsection{Maximal green sequences} Let $Q$ be a cluster quiver with the vertex set $Q_0=\{1, 2, \cdots, n\}$. The framed quiver associated with $Q$ is the ice quiver $\widehat{Q}$ where $\widehat{Q}_0:=Q_0\sqcup \{n+1, n+2, \cdots, 2n\}$, $F=\{n+1, n+2, \cdots, 2n\}$ and $\widehat{Q}_1:=Q_1\sqcup \{i\rightarrow i+n| i\in Q_0\}$. For any $1\leq i\leq n$, we write $i'=n+i$. Let $R\in Mut(\widehat{Q})$. A mutable vertex $i$ of $R$ is called green (resp., red) if there are no arrows in $R$ of the form $j'\rightarrow i$ (resp., $i\rightarrow j'$) for some $1\leq j\leq n$. The celebrated theorem of Derksen et al. \cite[Theorem 1.7]{DWZ1}, implies that any mutable vertex of $R$ is either green or red.

\begin{definition} (\cite{K}) A green sequence for $Q$ is a sequence $\mathbf{i}=(i_1, \cdots, i_l)$ of mutable vertices of $\widehat{Q}$ such that $i_1$ is green in $\widehat{Q}$ and $i_r$ is green in $\mu_{i_{r-1}}\circ\cdots \circ\mu_{i_1}(\widehat{Q})$, for each $2\leq r\leq l$. $l$ is called the length of the sequence $\mathbf{i}$. A green sequence $\mathbf{i}=(i_1, \cdots, i_l)$ is called maximal if every non-frozen vertex in $\mu_{\mathbf{i}}(\widehat{Q})=\mu_{i_{l}}\circ\cdots \circ\mu_{i_1}(\widehat{Q})$ is red.
\end{definition}

Let $Q$ be a cluster quiver with the vertex set $Q_0=\{1, 2, \cdots, n\}$. The exchange matrix $E_{\widehat{Q}}$ of $\widehat{Q}$ is $n\times 2n$ integer matrix with $(i, j)$-th entry $e_{ij}$ equal to the number of arrows from $i$ to $j$ minus the number of arrows from $j$ to $i$ in the quiver $\widehat{Q}$. For any $R\in Mut(\widehat{Q})$ the $n\times n$ submatrix of $E_R$ containing its last $n$ columns is called $c$-matrix of $Q$. A row vector of a $c$-matrix is called a $c$-vector.

\subsection{Cluster-tilted algebras} Let $H$ be a hereditary finite dimensional $k$-algebra, where $k$ is
an algebraically closed field and let $\mathcal{D} = D^b({\rm mod} H)$ be
the bounded derived category of finitely generated right $H$-modules
with shift functor $[1]$. Also, let $\tau$ be the $AR$-translation
in $\mathcal{D}$. The cluster category is defined as the orbit
category $\mathcal{C}_H = \mathcal{D} /F$, where $F = \tau^{-1}[1]$.
The objects of $\mathcal{C}_H$ are the same as the objects of
$\mathcal{D}$, but maps are given by $\Hom_{\mathcal{C}_H}(X, Y )
=\bigoplus_{i\in \mathbb{Z}} \Hom_\mathcal{D}(X, F^iY)$ (see \cite{BMRRT}). An object $\widetilde{T}$ in
$\mathcal{C}_H$ is called cluster-tilting provided for any object
$X$ of $\mathcal{C}_H$, we have $\Ext^1_{\mathcal{C}_H}(\widetilde{T}, X) = 0$ if
and only if $X$ lies in the additive subcategory ${\rm add}(\widetilde{T})$ of
$\mathcal{C}_H$ generated by $\widetilde{T}$. Let $\widetilde{T}$ be a cluster-tilting
object in $\mathcal{C}_H$. The cluster-tilted algebra associated to
$\widetilde{T}$ is the algebra $\End_{\mathcal{C}_H}(\widetilde{T})^{op}$ \cite{BMR1}. It is known that a cluster-tilted algebra $\End_{\mathcal{C}_H}(\widetilde{T})^{op}$ is of finite representation type if and only if $H$ is Morita equivalent to the path algebra of a simply-laced Dynkin quiver \cite{BMR1}.

Let $\Lambda$ be a cluster-tilted algebra over
an algebraically closed field $k$. Then (up to Morita equivalence) $\Lambda$ is of the form $kQ/I$,
where $Q$ is a finite quiver and $I$ is some admissible ideal in the path algebra $kQ$. It is known that $Q$ is a cluster quiver \cite{ABS}. A (maximal) green sequence for $\Lambda$ is a (maximal) green sequence for $Q$.

Assem, Br\"{u}stle and Schiffler \cite{ABS} and Zhu \cite{Z} independently provided a
characterization of
cluster-tilted algebras. They proved that an algebra $\Lambda$ is
cluster-tilted if and only if there exists a tilted algebra $B$ such
that $\Lambda\cong B^c$, where $B^c$ is trivial extension algebra
$B^c = B \ltimes \Ext^2_B(DB,B)$, with $D = \Hom(-, k)$ the
$k$-duality. Recall that a $k$-algebra $B$ is said to be tilted
provided $B$ is the endomorphism ring of a tilting $H$-module $T$,
where $H$ is a finite dimensional hereditary $k$-algebra.

\subsection{Maximal forward hom-orthogonal sequence} Let $\Lambda$ be a $k$-algebra. For any finitely generated right $\Lambda$-module $M$, let $\mathcal{F}(M)=M^\bot=\{X\in mod\Lambda | Hom_\Lambda(M, X)=0\}$ and $\mathcal{G}(M)={^\bot}\mathcal{F}(M)=\{X\in mod\Lambda | Hom_\Lambda(X, Y)=0 \text{ for all } Y\in \mathcal{F}(M)\}$. Therefore $\mathcal{F}(M)=\mathcal{G}(M)^\bot$ and $(\mathcal{G}(M), \mathcal{F}(M))$ is a torsion pair in $mod\Lambda$.

A $\Lambda$-module $M$ is called Schurian if its endomorphism ring is a division ring.

\begin{definition} (Definition 2.2 of \cite{I}) A forward hom-orthogonal sequence in $mod\Lambda$ is a finite sequence of Schurian modules $M_1, M_2, \cdots, M_m$ that
\begin{itemize}
\item[(1)] $Hom_\Lambda(M_i, M_j)=0$ for all $1\leq i<j\leq m$.
\item[(2)] The sequence is maximal in $\mathcal{G}(M)$ where $M=M_1\oplus \cdots\oplus M_m$.
\end{itemize}
A forward hom-orthogonal sequence is maximal if $\mathcal{G}(M)=mod\Lambda$.
\end{definition}

\subsection{Cluster repetitive algebra} The cluster repetitive algebra is a certain Galois covering of a cluster-tilted algebra defined by Assem, Br\"{u}stle and Schiffler in \cite{ABS1}. Let $B$ be a tilted algebra, $B_i=B$ and $E_i=Ext^2_B(DB, B)$ with $D = \Hom(-, k)$ the
$k$-duality, for each $i\in \mathbb{Z}$. The following locally finite dimensional algebra without identity
$$\check{B}=\left(\begin{array}{ccccc}
\ddots & &&0 & \\
\ddots &B_{-1}& & & \\
 &E_0&B_0&& \\
 & &E_1&B_1&\\
 & 0&&\ddots &\ddots \\
\end{array}\right)$$
where matrices have only finitely many non-zero entries, and the multiplication is induced from that of $B$, the $B$-$B$-bimodule $Ext^2_B(DB, B)$ and the zero map $Ext^2_B(DB, B)\otimes_BExt^2_B(DB, B)\rightarrow 0$, is called the cluster repetitive algebra \cite{ABS1}.

Let $\Lambda=B \ltimes \Ext^2_B(DB,B)$ be a cluster-tilted algebra. The identity maps $B_i\rightarrow B_{i-1}$ and $E_i\rightarrow E_{i-1}$ induce an isomorphism $\varphi$ of $\check{B}$. The orbit category $\check{B}/(\varphi)$ which inherits the structure of $k$-algebra from $\check{B}$ is isomorphic to $\Lambda$ \cite{ABS1}. The projection functor $\check{B}\rightarrow \Lambda$ is a Galois covering functor with infinite cyclic group generated by $\varphi$ and we denote by $\pi: mod\check{B}\rightarrow mod\Lambda$ the corresponding push-down functor.


\section{Main Result}\label{sec:results}

Let $\Lambda$ be an arbitrary cluster-tilted algebra of finite type. It is known that there exists a quiver $Q$ of Dynkin type such that $H=kQ$, and a tilting $H$-module $T$ such that $\Lambda\cong B\ltimes E$ where $B=\End_H(T)$ and $E=\Ext^2_B(DB,B)$ (see \cite{ABS} or \cite{Z}). Let $(\mathcal{T}(T_H), \mathcal{F}(T_H))$ be the torsion pair induced by $T$ in $modH$, where $\mathcal{T}(T_H)=\{X\in modH| Ext^1_H(T, X)=0\}$ and $\mathcal{F}(T_H)=\{X\in modH| Hom_H(T, X)=0\}$. The tilting $H$-module $T$ induces a splitting torsion pair $(\mathcal{X}(T_H), \mathcal{Y}(T_H))$ in $modB$, where $\mathcal{X}(T_H)=\{X\in modB| X\otimes_BT=0\}$ and $\mathcal{Y}(T_H)=\{Y\in modB| Tor^B_1(Y, T)=0\}$. Let $\check{B}$ be the corresponding cluster repetitive algebra. The $\check{B}$-modules are of the form $(M_i, \alpha_i)_{i\in \mathbb{Z}}$, with $B$-modules $M_i$ and $B$-homomorphisms $\alpha_i:M_i\otimes_BE\rightarrow M_{i-1}$.

\begin{lemma}\label{Lem} Let $M=(M_i, \alpha_i)_{i\in \mathbb{Z}}$ be an indecomposable $\check{B}$-module of finite length. Then there exists $t\in \mathbb{Z}$ such that $M_{t-1}\in\mathcal{X}(T_H)$, $M_{t}\in\mathcal{Y}(T_H)$ and $M_i=0$ for each $i\neq t, t-1$.
\end{lemma}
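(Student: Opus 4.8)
The plan is to exploit the block-triangular structure of $\check{B}$ together with the splitting torsion pair $(\mathcal{X}(T_H),\mathcal{Y}(T_H))$ in $\mathrm{mod}\,B$. Recall that a $\check{B}$-module $M=(M_i,\alpha_i)_{i\in\mathbb Z}$ consists of $B$-modules $M_i$ glued by maps $\alpha_i:M_i\otimes_B E\to M_{i-1}$, and the key structural fact (coming from $\Lambda\cong B\ltimes E$ being cluster-tilted, so that $E=\Ext^2_B(DB,B)$ behaves homologically like a shift) is that $E$ "moves" modules across the torsion pair: tensoring a module in $\mathcal{Y}(T_H)$ with $E$ lands (up to the relevant vanishing) in $\mathcal{X}(T_H)$, and a nonzero structure map $\alpha_i:M_i\otimes_B E\to M_{i-1}$ therefore forces $M_i$ to have a nonzero image in $\mathcal{Y}(T_H)$ and $M_{i-1}$ a nonzero image in $\mathcal{X}(T_H)$.

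\textbf{Step 1: decompose each $M_i$ along the splitting torsion pair.} Since $(\mathcal{X}(T_H),\mathcal{Y}(T_H))$ is splitting, each indecomposable $B$-module lies entirely in $\mathcal{X}(T_H)$ or entirely in $\mathcal{Y}(T_H)$; write $M_i=M_i^{\mathcal X}\oplus M_i^{\mathcal Y}$ accordingly. The first task is to check that the structure maps $\alpha_i$ respect this decomposition in the sense that $\alpha_i$ kills $M_i^{\mathcal X}\otimes_B E$ and maps $M_i^{\mathcal Y}\otimes_B E$ into $M_{i-1}^{\mathcal X}$. The first vanishing uses that $X\otimes_B T=0$ for $X\in\mathcal{X}(T_H)$ and the identification of $E$ with (a shift of) $DB$ over the tilted algebra; the second uses that the image of anything tensored with $E$ is in $\mathcal X(T_H)$, which is closed under quotients. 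Granting this, I would then split $M$ itself as a $\check B$-module into the "$\mathcal X$-part at each level glued only to the $\mathcal Y$-part one level up", and observe that such a gluing diagram decomposes as a direct sum indexed by the positions where a nonzero $\alpha_i$ occurs.

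\textbf{Step 2: use indecomposability and finite length.} Because $M$ has finite length, only finitely many $M_i$ are nonzero, so there is a largest index with $M_i\neq 0$; call it $t$ if $M_t^{\mathcal Y}\neq 0$, else shift. Indecomposability then forces: (i) at most one pair of consecutive levels is nonzero, for otherwise Step 1 exhibits a nontrivial direct-sum decomposition; (ii) at the one surviving pair, $M_t\in\mathcal{Y}(T_H)$ and $M_{t-1}\in\mathcal{X}(T_H)$ — a purely $\mathcal X$ summand at level $t$ or a purely $\mathcal Y$ summand at level $t-1$ would again split off. The boundary cases $M_{t-1}=0$ or $M_t=0$ are allowed by the statement since $0$ lies in both $\mathcal X(T_H)$ and $\mathcal Y(T_H)$, so these degenerate one-level modules are covered.

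\textbf{The main obstacle} I anticipate is Step 1 — specifically, pinning down precisely how $-\otimes_B E$ interacts with the torsion pair, i.e. that $E\cong\Ext^2_B(DB,B)$ carries $\mathcal{Y}(T_H)$ into $\mathcal{X}(T_H)$ and annihilates $\mathcal{X}(T_H)$ after tensoring. This is where the hypothesis that $B$ is tilted (not merely any algebra) is essential: one needs the homological vanishing $\mathcal{X}(T_H)=\{X : X\otimes_B T=0\}$ together with the bimodule description of $E$ to run a $\mathrm{Tor}$/$\mathrm{Ext}$ computation comparing $-\otimes_B E$ with the image of the tilting functor. Once that compatibility is established, the rest is the soft direct-sum bookkeeping of Steps 1--2.
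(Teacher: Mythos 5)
Your overall architecture is the same as the paper's: split each $M_i=X_i\oplus Y_i$ along the splitting torsion pair, show that every structure map $\alpha_i$ vanishes on $X_i\otimes_B E$ and has image inside $X_{i-1}$, and then let indecomposability of $M$ force the conclusion. Your Step 2 and the second half of Step 1 (the image of $\alpha_i$ lies in $\mathcal{X}(T_H)$ because $M_i\otimes_B E$ is a quotient of $E^r$ with $E\in\mathcal{X}(T_H)$ and the torsion class is closed under quotients) are exactly what the paper does.

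The genuine gap is the first half of Step 1, which you yourself flag as the main obstacle and do not carry out: why does $\alpha_i$ annihilate $X_i\otimes_B E$? The route you sketch --- identifying $E$ with ``a shift of $DB$'' and invoking $X\otimes_B T=0$ --- is not obviously workable: $E=\Ext^2_B(DB,B)$ is not $DB$ as a bimodule, and there is no direct comparison of $-\otimes_B E$ with $-\otimes_B T$ to run. The paper sidesteps computing $X_i\otimes_B E$ altogether. By the adjunction $\Hom_B(X_i\otimes_B E,N)\cong\Hom_B(X_i,\Hom_B(E,N))$ it suffices to know that $\Hom_B(E,N)\in\mathcal{Y}(T_H)$ for every $B$-module $N$, since then the outer $\Hom$ vanishes against $X_i\in\mathcal{X}(T_H)$. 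This is established from the identification $E\cong\Ext^1_H(T,\tau^{-1}T)$ of \cite{ABS}: for $Y\in\mathcal{Y}(T_H)$ one has $\Hom_B(E,Y)=0$ because $E\in\mathcal{X}(T_H)$, while for $X\in\mathcal{X}(T_H)$ one writes $X\cong\Ext^1_H(T,X')$ with $X'\in\mathcal{F}(T_H)$ by Brenner--Butler and computes $\Hom_B(E,X)\cong\Hom_H(\tau^{-1}T,X')$, which by \cite[Corollary IV.2.15]{ASS} is $\Hom_H(T,-)$ applied to a torsion $H$-module and hence lies in $\mathcal{Y}(T_H)$. (As a byproduct this argument even gives $X_i\otimes_B E=0$, the stronger statement you were aiming at, by testing against $N=X_i\otimes_B E$ itself.) Without some such computation the claimed compatibility of the $\alpha_i$ with the decomposition is unsupported; with it, the rest of your argument goes through as you describe.
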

\begin{proof} There exist $X_i\in \mathcal{X}(T_H)$ and $Y_i\in \mathcal{Y}(T_H)$ such that $M_i=X_i\oplus Y_i$, for each $i\in \mathbb{Z}$. By the Brenner-Butler theorem, $X_i\cong Ext^1_H(T, X'_i)$ for some $X'_i\in \mathcal{F}(T_H)$. Also we have $E= Ext^1_H(T, \tau^{-1}(T))$ (see the proof of \cite[Theorem 3.4]{ABS}). Then $Hom_B(E, Ext^1_H(T, X'_i))=Hom_B(Ext^1_H(T, \tau^{-1}(T)), Ext^1_H(T, X'_i))\cong Hom_H(\tau^{-1}(T), X'_i)$ and by \cite[Corollary IV.2.15]{ASS}, $Hom_H(\tau^{-1}(T), X'_i)\cong Hom_H(T, \tau(X_i))$. Therefore $Hom_B(E, Ext^1_H(T, X'_i))\cong\\ Hom_H(T, \tau(X_i))=Hom_H(T, t(\tau(X_i)))\in\mathcal{Y}(T_H)$, where $t(\tau(X_i))$ is a torsion submodule of $\tau(X_i)$. By the adjoint formula, $Hom_B(X_i\otimes_BE, X_{i-1})\cong Hom_B(X_i, Hom_B(E, X_{i-1}))$. But $Hom_B(E, X_{i-1})\in\mathcal{Y}(T_H)$ and $X_i\in \mathcal{X}(T_H)$, and so $Hom_B(X_i\otimes_BE, X_{i-1})=0$. Also $Hom_B(X_i\otimes_BE, Y_{i-1})\cong Hom_B(X_i, Hom_B(E, Y_{i-1}))$ and $E= Ext^1_H(T, \tau^{-1}(T))\in \mathcal{X}(T_H)$, then $Hom_B(X_i\otimes_BE, Y_{i-1})=0$. Therefore the restriction of $\alpha_i$ to $X_i\otimes_BE$ is zero. Since $M_i$ is a finitely generated $B$-module, there exists an epimorphism $f:B^r\rightarrow M_i$. Then we have an epimorphism $f:B^r\otimes_BE\rightarrow M_i\otimes_BE$ and so $M_i\otimes_BE\in \mathcal{X}(T_H)$. Therefore the image of $\alpha_i$ contains in $X_{i-1}$. Then the indecomposability of $M$ implies that $M_{t-1}=X_{t-1}$ and $M_t=Y_t$, for some $t\in \mathbb{Z}$ and $M_i=0$ for each $i\neq t, t-1$ and the result follows.
\end{proof}

Now, we are ready to prove our main theorem.

\begin{theorem}\label{Thm} Let $\Lambda$ be a cluster-tilted algebra of finite type and $B$ be one of the associated tilted algebras. Then the indecomposable $B$-modules, ordered from right to left in the Auslander-Reiten quiver of $\Lambda$, form a maximal forward hom-orthogonal sequence of $\Lambda$-modules whose dimension vectors form the $c$-vectors of a maximal green sequence for $\Lambda$.
\end{theorem}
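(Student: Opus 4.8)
The plan is to reduce the statement to Igusa's equivalence $(1)\Leftrightarrow(2)$ recalled in the introduction. Writing $M_1,\dots,M_m$ for the indecomposable $B$-modules listed from right to left in $\Gamma(\operatorname{mod}\Lambda)$, it suffices to prove that this is a maximal forward hom-orthogonal sequence of $\Lambda$-modules: its dimension vectors are then, by $(2)\Rightarrow(1)$, the $c$-vectors of a maximal green sequence. Concretely we must check $(i)$ each $M_i$ is Schurian over $\Lambda$; $(ii)$ $\operatorname{Hom}_\Lambda(M_i,M_j)=0$ for $i<j$; and $(iii)$ no Schurian $\Lambda$-module outside the list can be inserted while keeping $(ii)$.

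The first step is to describe $\operatorname{ind}\Lambda$ through the covering. Since $\Lambda$ is representation-finite, the push-down $\pi\colon\operatorname{mod}\check B\to\operatorname{mod}\Lambda$ is dense on indecomposables, so by Lemma~\ref{Lem} every indecomposable $\Lambda$-module is either an indecomposable $B$-module, or the push-down of an indecomposable $\check B$-module $\widetilde N=(N_i,\alpha_i)$ supported on two consecutive copies $t-1,t$ with $N_{t-1}\in\mathcal X(T_H)$ and $N_t\in\mathcal Y(T_H)$ both nonzero; in that case $N_{t-1}$ is a submodule and $N_t$ a quotient of $\widetilde N$, whence a short exact sequence of $\Lambda$-modules
\[
0\longrightarrow X'\longrightarrow N\longrightarrow Y'\longrightarrow0,\qquad N=\pi\widetilde N,
\]
in which every indecomposable summand of $X'$ lies in $\mathcal X(T_H)$ and every indecomposable summand of $Y'$ lies in $\mathcal Y(T_H)$ (so all these summands occur among $M_1,\dots,M_m$), and $X',Y'\neq0$. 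Because $\Lambda$-linear maps between $B$-modules are $B$-linear, the full subcategory of $B$-modules in $\operatorname{mod}\Lambda$ is $\operatorname{mod}B$; and $B$, being a tilted algebra of Dynkin type, is representation-directed with all indecomposables bricks --- the latter via the Brenner--Butler equivalences $\mathcal T(T_H)\simeq\mathcal Y(T_H)$ and $\mathcal F(T_H)\simeq\mathcal X(T_H)$ and the corresponding fact over hereditary algebras of Dynkin type. This gives $(i)$. Let $\preceq$ be the partial order of the representation-directed algebra $B$. Then $\operatorname{Hom}_\Lambda(M_i,M_j)=\operatorname{Hom}_B(M_i,M_j)\neq0$ with $i<j$ would force $M_i\prec M_j$, contradicting that the right-to-left order restricts, on the $B$-modules, to a linear extension of the reverse of $\preceq$; this is $(ii)$. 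Finally, since $\operatorname{Hom}_B(\mathcal X(T_H),\mathcal Y(T_H))=0$ and the torsion pair is splitting, $\mathcal Y(T_H)$ is a $\preceq$-order ideal and $\mathcal X(T_H)$ its complement, so, read from right to left, $\Gamma(\operatorname{mod}\Lambda)$ lists every module of $\mathcal X(T_H)$ before every module of $\mathcal Y(T_H)$.

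For $(iii)$ the key is the elementary remark that a Schurian module $N$ not in the list cannot be inserted while preserving $(ii)$ whenever $a\le b$, where $a=\min\{i:\operatorname{Hom}_\Lambda(M_i,N)\neq0\}$ and $b=\max\{j:\operatorname{Hom}_\Lambda(N,M_j)\neq0\}$: inserting $N$ immediately after $M_k$ would force $\operatorname{Hom}_\Lambda(M_i,N)=0$ for all $i\le k$ and $\operatorname{Hom}_\Lambda(N,M_j)=0$ for all $j>k$, i.e. $b\le k<a$. So let $N$ be any Schurian $\Lambda$-module not among the $M_i$; it is indecomposable and not a $B$-module, hence sits in a short exact sequence $0\to X'\to N\to Y'\to0$ as above. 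The inclusion $X'\hookrightarrow N$ is nonzero and the surjection $N\twoheadrightarrow Y'$ is nonzero, so $a$ is at most the index of some indecomposable summand of $X'$ and $b$ is at least the index of some indecomposable summand of $Y'$; since every summand of $X'$ lies in $\mathcal X(T_H)$, every summand of $Y'$ in $\mathcal Y(T_H)$, and $\mathcal X(T_H)$-modules precede $\mathcal Y(T_H)$-modules in the sequence, we get $a<b$. Hence $N$ cannot be inserted, and the sequence is maximal.

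The step I expect to be the real obstacle is justifying the order in which the indecomposable $B$-modules appear ``from right to left'' in $\Gamma(\operatorname{mod}\Lambda)$: namely that it restricts to a linear extension of the reverse of $\preceq$ and, crucially, lists all $\mathcal X(T_H)$-modules before all $\mathcal Y(T_H)$-modules, matching the direction of the short exact sequence above (with the $\mathcal X$-part as submodule). This rests on the shape of the Auslander--Reiten quiver of $\check B$ --- how the copies of $\Gamma(\operatorname{mod}B)$ are strung together along their $\mathcal X$- and $\mathcal Y$-parts by the two-copy modules of Lemma~\ref{Lem} --- together with the representation-directedness of the tilted algebra $B$ and the splitting of $(\mathcal X(T_H),\mathcal Y(T_H))$. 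Once this combinatorial picture is pinned down, the verifications of $(i)$, $(ii)$ and the short exact sequence argument for $(iii)$ go through as sketched.
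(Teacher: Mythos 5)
Your proposal follows essentially the same route as the paper: reduce to Igusa's equivalence between maximal green sequences and maximal forward hom-orthogonal sequences, use Lemma~\ref{Lem} and the Dowbor--Skowro\'{n}ski push-down to write every indecomposable $\Lambda$-module that is not a $B$-module as an extension $0\to X'\to N\to Y'\to 0$ with $X'\in\mathcal{X}(T_H)$ and $Y'\in\mathcal{Y}(T_H)$ both nonzero, and conclude that no such module can be inserted. The one step you flag as the remaining obstacle --- that the right-to-left order on the indecomposable $B$-modules is forward hom-orthogonal and places the $\mathcal{X}(T_H)$-modules before the $\mathcal{Y}(T_H)$-modules --- is precisely the point the paper disposes of by citing the proof of Igusa's Lemma~3.24 rather than re-deriving it.
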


\begin{proof} Let $H=kQ$ be a representation finite hereditary algebra, $T$ be a tilting $H$-module, $B=\End_H(T)$ and $E=Ext^2_B(DB,B)$ such that $\Lambda\cong B\ltimes E$. Let $M_i$ be the indecomposable $B$-modules numbered from right to left in the Auslander-Reiten quiver of $\Lambda$. We have $Hom_\Lambda(M_i, M_j)=0$ for $i<j$ (see the proof of \cite[Lemma 3.24]{I}). Let $M$ be an indecomposable $\Lambda$-module which is not a $B$-module. By using \cite[Theorem 1.1]{I}, it is enough to show that the $\Lambda$-module $M$ cannot be in any forward hom-orthogonal sequence which contains all indecomposable $B$-modules. By Lemma \ref{Lem}, $\check{B}$ is locally support finite and so by the density theorem of Dowbor and Skowro\'{n}ski \cite{DS}, there exists indecomposable $\check{B}$-module $\widetilde{M}$ such that $\pi(\widetilde{M})\cong M$. By Lemma \ref{Lem}, there exists $t\in\mathbb{Z}$ such that $\widetilde{M}=(\widetilde{M}_i, \alpha_i)_{i\in \mathbb{Z}}$, where $\widetilde{M}_{t-1}\in\mathcal{X}(T_H)$, $\widetilde{M}_{t}\in\mathcal{Y}(T_H)$, $\widetilde{M}_i=0$ for each $i\neq t, t-1$ and $\alpha_t:\widetilde{M}_t\otimes_BE\rightarrow \widetilde{M}_{t-1}$. Let $\widetilde{L}=(\widetilde{L}_i, \beta_i)_{i\in \mathbb{Z}}$, where $\widetilde{L}_{t-1}=\widetilde{M}_{t-1}$ and $\widetilde{L}_i=0$ for each $i\neq t-1$ and $\widetilde{N}=(\widetilde{N}_i, \gamma_i)_{i\in \mathbb{Z}}$, where $\widetilde{N}_{t}=\widetilde{M}_{t}$ and $\widetilde{N}_i=0$ for each $i\neq t$. Then $\widetilde{L}$ and $\widetilde{N}$ are $\check{B}$-modules and we have an exact sequence $0\longrightarrow \widetilde{L}\buildrel {\widetilde{f}} \over \longrightarrow \widetilde{M}\buildrel {\widetilde{g}} \over\longrightarrow\widetilde{N}\longrightarrow 0$ in $mod\check{B}$ which induces an exact sequence
$0\longrightarrow \widetilde{M}_{t-1}\buildrel {f} \over \longrightarrow M\buildrel {g} \over\longrightarrow\widetilde{M}_{t}\longrightarrow 0$ in $mod\Lambda$. Since $\widetilde{M}_{t-1}$ and $\widetilde{M}_{t}$ are $B$-modules and $M$ is not a $B$-module, $f$ and $g$ are nonzero homomorphisms. Finally, since $\widetilde{M}_{t-1}\in\mathcal{X}(T_H)$, $\widetilde{M}_{t}\in\mathcal{Y}(T_H)$ and $f$ and $g$ are nonzero, the $\Lambda$-module $M$ cannot be in any forward hom-orthogonal sequence which contains all indecomposable $B$-modules and the result follows.
\end{proof}

The following example shows that there exist a cluster-tilted algebra $\Lambda$ of finite type and an associated tilted algebra $B$ such that the part $(b)$ of the conjecture \ref{con} is not true for $\Lambda$ and $B$.

\begin{example}\label{exa}(\cite[Examples 2.3 and 3.3]{BN}) Let $\Lambda=kQ/I$ be a cluster-tilted algebra, where the quiver $Q$ given by
$$\hskip.5cm \xymatrix{
&{2} \ar@{<-}[dl]_{\alpha}\ar[dr]^{\beta}&\\
{3}\ar[dr]_{\delta}&&{1}\ar[ll]_{\gamma}\\
&{4}\ar[ur]_{\eta}&
}
\hskip.5cm$$
and $I=<\alpha\beta-\delta\eta, \eta\gamma, \gamma\delta, \gamma\alpha, \beta\gamma>$ is an admissible ideal of $kQ$. Let $H=k\Gamma$ be a representation finite hereditary algebra, where the quiver $\Gamma$ given by
$$\hskip.5cm \xymatrix{
&{2}&\\
{1}&{3}\ar[l]\ar[u]&{4}\ar[l]\\
}
\hskip.5cm
$$
and $T=P_1\oplus P_2\oplus S_4\oplus P_4$ be a tilting $H$-module, where $P_i$ is the indecomposable projective $H$-module corresponding to the vertex $i$ of $\Gamma$ and $S_i$ is the simple $H$-module corresponding to the vertex $i$ of $\Gamma$. Then $\Lambda\cong B\ltimes \Ext^2_B(DB, B)$, where $B=End_H(T)$. The Auslander-Reiten quiver of $\Lambda$ given by
$$
\resizebox{12.75cm}{.75cm}{
\xymatrix@-3mm{
&*+[F]{\bbm1\\0\hskip.3cm 1\\0\ebm}\ar[dr]&&*+[F][o][F.]{\bbm0\\0\hskip.3cm 0\\1\ebm}\ar[dr]&&*+[F][o][F.]{\bbm1\\1\hskip.3cm 0\\0\ebm}\ar[dr]&&& \\
*+[F][o][F.]{\bbm0\\0\hskip.3cm1\\0\ebm}\ar[dr]\ar[ur]&&*+[F]{\bbm1\\0\hskip.3cm1\\1\ebm}\ar[ur]\ar[dr]\ar[ddr]&&*+[F][o][F.]{\bbm1\\1\hskip.3cm0\\1\ebm}\ar[dr]\ar[ur]&&*+[F][o][F.]{\bbm0\\1\hskip.3cm0\\0\ebm}\ar[ddr]&&*+[F][o][F.]{\bbm0\\0\hskip.3cm1\\0\ebm}\\
&*+[F]{\bbm0\\0\hskip.3cm1\\1\ebm}\ar[ur]&&*+[F][o][F.]{\bbm1\\0\hskip.3cm0\\0\ebm}\ar[ur]&&*+[F][o][F.]{\bbm0\\1\hskip.3cm0\\1\ebm}\ar[ur]&&&\\
&&&*+[F]{\bbm1\\1\hskip.3cm1\\1\ebm}\ar[uur]&&&&*+[o][F.]{\bbm0\\1\hskip.3cm1\\0\ebm}\ar[uur]&\\
}
}
$$
, where indecomposable modules are represented by their dimension vectors and indecomposable $B$-modules in the Auslander-Reiten quiver of $\Lambda$ are represented in circles. Let $m, p$ be the maximum and minimum length of maximal green sequences for $\Lambda$. According to the \cite[Proposition 3.19]{I}, $m+p-4$ is at most equal to 12. Also by \cite[Theorem 6.1]{GMS}, $p=5$ and hence $m\leq 11$. Now let $H'=k\Gamma'$, where the quiver $\Gamma'$ given by
$$\hskip.5cm \xymatrix{
&{2}\ar[d]&\\
{1}&{3}\ar[l]&{4}\ar[l]\\
}
\hskip.5cm
$$
, $T'=P_1\oplus P_2\oplus I_1\oplus P_4$ be a tilting $H'$-module, where $I_1$ is the indecomposable injective $H'$-module corresponding to the vertex $1$ of $\Gamma'$ and $B'=End_{H'}(T')$. We have $\Lambda\cong B'\ltimes \Ext^2_{B'}(DB', B')$. There are $11$ indecomposable $B'$-modules in the Auslander-Reiten quiver of $\Lambda$ which represented by squares. Therefore by Theorem \ref{Thm}, the length of the longest maximal green sequence for $\Lambda$ is $11$ but there are $8$ indecomposable $B$-modules in the Auslander-Reiten quiver of $\Lambda$.
\end{example}

Example \ref{exa} suggests the following reformulation of the part $(b)$ of the Conjecture \ref{con}:\\

\begin{conjecture}(Igusa-Todorov)
Let $\Lambda$ be a cluster-tilted algebra of finite type. Then there exists a tilted algebra $B$ such that $\Lambda\cong B\ltimes \Ext^2_B(DB, B)$ and the number of isomorphism classes of indecomposable $\Lambda$-modules which are indecomposable $B$-modules, is equal to the length of the longest maximal green sequence for $\Lambda$.
\end{conjecture}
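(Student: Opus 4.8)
The plan is to reduce the theorem to two separate statements: first, that the indecomposable $B$-modules, read right to left in the AR-quiver of $\Lambda$, do form a forward hom-orthogonal sequence; and second, that this sequence is \emph{maximal}, i.e. that no indecomposable $\Lambda$-module outside $\mathrm{add}(B)$ can be inserted into it. Once both are established, Igusa's equivalence \cite[Theorem 1.1]{I} immediately converts the maximal forward hom-orthogonal sequence into the $c$-vectors of a maximal green sequence for $\Lambda$, so the dimension-vector assertion is automatic and needs no separate argument.

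For the first part, I would invoke the ordering of the AR-quiver of $\Lambda$: if $i<j$ in the right-to-left numbering, then any nonzero map $M_i\to M_j$ would have to travel ``backwards'' against the AR-quiver orientation restricted to the indecomposable $B$-modules, and this is ruled out by the structure of tilted versus cluster-tilted algebras as recorded in \cite[Lemma 3.24]{I}; so I would simply cite that. The substance of the proof is the second part. Here the key structural input is the cluster repetitive algebra $\check B$ and Lemma~\ref{Lem}: every indecomposable $\check B$-module of finite length is ``supported in two consecutive layers'' $t-1,t$ with $\widetilde M_{t-1}\in\mathcal X(T_H)$ and $\widetilde M_t\in\mathcal Y(T_H)$. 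I would first use Lemma~\ref{Lem} to conclude that $\check B$ is locally support finite, then apply the density theorem of Dowbor--Skowro\'nski \cite{DS} to lift an arbitrary indecomposable non-$B$ $\Lambda$-module $M$ to an indecomposable $\check B$-module $\widetilde M$ with $\pi(\widetilde M)\cong M$.

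The heart of the argument is then the following: from the two-layer description of $\widetilde M$ I would build the short exact sequence $0\to\widetilde L\to\widetilde M\to\widetilde N\to 0$ in $\mathrm{mod}\,\check B$, where $\widetilde L$ is concentrated in layer $t-1$ (equal to $\widetilde M_{t-1}$) and $\widetilde N$ in layer $t$; pushing down along $\pi$ yields $0\to \widetilde M_{t-1}\to M\to \widetilde M_t\to 0$ in $\mathrm{mod}\,\Lambda$, where now $\widetilde M_{t-1}$ and $\widetilde M_t$ are genuine indecomposable $B$-modules. Because $M$ is not a $B$-module, neither $f$ nor $g$ can be zero. The point is that $\widetilde M_{t-1}\in\mathcal X(T_H)$ sits strictly to one side of the torsion pair and $\widetilde M_t\in\mathcal Y(T_H)$ strictly to the other, so $\widetilde M_{t-1}$ appears \emph{later} and $\widetilde M_t$ \emph{earlier} in the right-to-left ordering; hence a forward hom-orthogonal sequence containing all indecomposable $B$-modules already contains both, and any legal insertion point for $M$ would have to sit after $\widetilde M_t$ yet before $\widetilde M_{t-1}$ — contradicting $\mathrm{Hom}_\Lambda(\widetilde M_t,M)=g^{\vee}\neq 0$ on one side and $\mathrm{Hom}_\Lambda(\widetilde M_{t-1},M)\ni f\neq 0$... more precisely, the nonzero $f:\widetilde M_{t-1}\to M$ and $g:M\to\widetilde M_t$ obstruct placing $M$ anywhere relative to these two $B$-modules while keeping property (1) of a forward hom-orthogonal sequence. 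So $M$ cannot be inserted, the sequence of $B$-modules is maximal, and we are done.

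The main obstacle I anticipate is verifying that the two-layer splitting of $\widetilde M$ really does force the nonvanishing of both $f$ and $g$ and that these maps genuinely obstruct insertion — i.e. pinning down exactly how the splitting torsion pair $(\mathcal X(T_H),\mathcal Y(T_H))$ in $\mathrm{mod}\,B$ interacts with the right-to-left AR-ordering of $\Lambda$, and confirming that ``lies between two $B$-modules in a forbidden way'' is the correct obstruction in the sense of Igusa's definition of maximality. Checking that $\check B$ is locally support finite from Lemma~\ref{Lem} is routine, and the existence of the short exact sequence $0\to\widetilde L\to\widetilde M\to\widetilde N\to 0$ is immediate from the matrix form of $\check B$-modules, so those steps should be quick.
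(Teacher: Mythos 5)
Your proposal is a proof of Theorem~\ref{Thm} (part (a) of Conjecture~\ref{con}), and as such it tracks the paper's argument for that theorem almost verbatim: the citation of \cite[Lemma 3.24]{I} for forward hom-orthogonality, Lemma~\ref{Lem} plus local support finiteness and the Dowbor--Skowro\'nski density theorem to lift $M$ to $\widetilde M$, the two-layer short exact sequence pushed down to $0\to\widetilde M_{t-1}\to M\to\widetilde M_t\to 0$, and the nonvanishing of $f$ and $g$ as the obstruction to insertion. But that is not the statement you were asked to prove. The statement is the \emph{final} conjecture of the paper --- the reformulation of part (b) --- which asserts that for \emph{some} choice of associated tilted algebra $B$, the number of indecomposable $B$-modules equals the length of the \emph{longest} maximal green sequence for $\Lambda$. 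The paper does not prove this; it is explicitly left as a conjecture, motivated by Example~\ref{exa}.

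The concrete gap is this: your argument (like Theorem~\ref{Thm}) only establishes one inequality. For each associated tilted algebra $B$ you produce a maximal forward hom-orthogonal sequence, hence a maximal green sequence, of length equal to the number of indecomposable $B$-modules; this shows the longest maximal green sequence is \emph{at least} that long. It says nothing about whether some other maximal green sequence --- equivalently, by \cite[Theorem 1.1]{I}, some other maximal forward hom-orthogonal sequence of Schurian $\Lambda$-modules not arising from any tilted algebra --- could be strictly longer. Maximal green sequences for a fixed $\Lambda$ have varying lengths (Example~\ref{exa} exhibits associated tilted algebras giving lengths $8$ and $11$ for the same $\Lambda$), so maximality of a single sequence does not bound the length of all others. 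To prove the conjecture you would need both to identify the right $B$ (not an arbitrary associated tilted algebra, as the $8$-versus-$11$ example shows) and to prove an upper bound on the length of every maximal forward hom-orthogonal sequence in terms of that $B$; neither step appears in your proposal, and the upper bound is precisely the open content of the conjecture. In the example the author only gets the bound $m\le 11$ by combining external results (\cite[Proposition 3.19]{I} and \cite[Theorem 6.1]{GMS}), not from the covering-theoretic argument you describe.
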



\section*{acknowledgements}
The author would like to thank the referee for careful reading of the paper and helpful
comments. This research was in part supported by a grant from IPM (No. 96170417).

\end{document}